\documentclass[a4paper]{amsart}
\usepackage[utf8]{inputenc}
\usepackage[T2A]{fontenc}
\usepackage[english]{babel}

\usepackage{amssymb}
\usepackage{amsthm}

\newtheorem{lemma}{Lemma}
\newtheorem{theorem}{Theorem}

\def\hm#1{#1\nobreak\discretionary{}{\hbox{\ensuremath{#1}}}{}}

\def\Z{{\mathbb Z}}

\def\CC{{\mathbb C}}

\def\lcm{\mathop{\mathrm{lcm}}}

\def\lae{\lambda^{(e)}}
\def\phie{\phi^{(e)}}

\def\eps{\varepsilon}

\def\res{\mathop{\mathrm{res}}}
\def\llog{\mathop{\mathrm{llog}}}

\def\le{\leqslant}
\def\ge{\geqslant}

\def\p{\phantom0}

\def\sp{\!\!\!\!\!}


\usepackage[unicode]{hyperref}
\usepackage{breakurl}

\begin{document}

\title{Exponential Carmichael function}
\author{Andrew V. Lelechenko}
\address{I.~I.~Mechnikov Odessa National University}
\email{1@dxdy.ru}

\keywords{Exponential divisors, Carmichael function, moments of Riemann zeta-function}
\subjclass[2010]{
11A25  
11M06, 
11N37, 
11N56
}

\begin{abstract}
Consider exponential Carmichael function $\lae$ such that $\lae$ is multiplicative and $\lae(p^a) \hm= \lambda(a)$, where $\lambda$ is usual Carmichael function. We discuss the value of $\sum \lae(n)$, where $n$ runs over certain subsets of $[1,x]$, and provide bounds on the error term, using analytic methods and especially estimates of $\int_1^T \bigl| \zeta(\sigma+it) \bigr|^m dt$.
\end{abstract}

\maketitle

\section{Introduction}

Consider an operator $E$ over arithmetic functions such that
for every $f$ the function $Ef$ is multiplicative and
$$ (Ef)(p^a) = f(a), \qquad p \text{~is prime}. $$

For various functions $f$
(such as the divisor function,
the sum-of-divisor function,
Möbius function, the totient function and so on)
the behaviour of $Ef$
was studied by many authors,
starting from Subbarao~\cite{subbarao1972}.
The bibliography can be found in~\cite{bibliography}.

The notation for $Ef$, established by previous authors, is $f^{(e)}$.

\medskip

Carmichael function $\lambda$ is an arithmetic function such that
$$
\lambda(p^a) = \begin{cases}
\phi(p^a), & p>2 \text{~or~} a=1,2, \\
\phi(p^a)/2, & p=2 \text{~and~} a>2,
\end{cases}
$$
and if $n=p_1^{a_1} \cdots p_m^{a_m}$ is a canonical representation, then
$$
\lambda(n) = \lcm\bigl( \lambda(p_1^{a_1}), \ldots, \lambda(p_m^{a_m}) \bigr).
$$

This function was introduced at the beginning of the XX century in \cite{carmichael1909}, but intense studies started only in 1990-th, e. g. \cite{erdos1991}. Carmichael function finds applications in cryptography, e. g. \cite{friedlander1999}.

\medskip
Consider also the family of multiplicative functions
$$
\delta_r(p^a) = \begin{cases}
0, & a<r, \\
1, & a\ge r,
\end{cases}
\qquad r \text{~is integer.}
$$

Function $\delta_2$ is a characteristic function of the set of square-full numbers, $\delta_3$ --- of cube-full numbers and so on. Of course, $\delta_1 \equiv 1$.

Denote $\lae_r$ for the product  of $\delta_r$ and $\lae$:
$$
\lae_r(n) = \delta_r(n) \lae(n).
$$

The aim of our paper is to study asymptotic properties of
$\lae \equiv \lae_1$, $\lae_2$, $\lae_3$ and $\lae_4$.

Note that all proofs below remains valid for $\phie_r(n) = \delta_r(n) \phie(n)$ instead of~$\lae_r(n)$ for $r=1,2,3,4$.

\section{Notations}

Letter $p$ with or without indexes denotes a prime number.

We write $f\star g$ for Dirichlet convolution
$$ (f \star g)(n) = \sum_{d|n} f(d) g(n/d). $$

Denote
$$
\tau(a_1,\ldots,a_k; n) := \sum_{d_1^{a_1}\cdots d_k^{a_k} = n} 1.
$$

In asymptotic relations we use $\sim$, $\asymp$, Landau symbols $O$ and $o$, Vinogradov symbols $\ll$ and $\gg$ in their usual meanings. All asymptotic relations are given as an argument (usually $x$) tends to the infinity.

Everywhere $\eps>0$ is an arbitrarily small number (not always the same even in one equation).

As usual $\zeta(s)$ is Riemann zeta-function.
Real and imaginary components of the complex $s$ are denoted as $\sigma:=\Re s$ and~$t:=\Im s$, so $s=\sigma+it$.


For a fixed $\sigma\in[1/2,1]$ define
$$
m(\sigma) := \sup\biggl\{
m \biggm|
\int_1^T \bigl| \zeta(\sigma+it) \bigr|^m dt \ll T^{1+\eps}
\biggr\}.
$$
and
$$
\mu(\sigma) := \limsup_{t\to\infty} {\log \bigl|\zeta(\sigma+it)\bigr| \over \log t}.
$$

Below $H_{2005}=(32/205+\eps, 269/410+\eps)$ stands for Huxley's exponent pair from~\cite{huxley2005}.

\section{Preliminary lemmas}

\begin{lemma}\label{l:rational-maximal-order}
Let $F\colon \Z\to\CC$ be a multiplicative function such that  $F(p^a) \hm= f(a)$, where $f(n) \ll n^\beta$ for some $\beta>0$. Then
\begin{equation*}
\limsup_{n\to\infty} {\log F(n)\llog n \over\log n} = \sup_{n\ge1} {\log f(n)\over n}.
\end{equation*}
\end{lemma}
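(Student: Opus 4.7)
Write $A := \sup_{n \ge 1} \log f(n)/n$. The hypothesis $f(n) \ll n^\beta$ gives $\log f(n)/n = O(\log n/n) \to 0$, so $A$ is finite and attained at some $n_0$. I would prove the required equality by two matching one-sided estimates.

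For the lower bound $\limsup \ge A$, given $\eps > 0$ I would pick $a_0$ with $\log f(a_0)/a_0 > A - \eps$ and test the sequence $n_k := (p_1 p_2 \cdots p_k)^{a_0}$, where $p_1 < p_2 < \cdots$ are the consecutive primes. Multiplicativity gives $\log F(n_k) = k \log f(a_0)$, while the prime number theorem yields $\log n_k = a_0 \vartheta(p_k) \sim a_0 k \log k$ and hence $\llog n_k \sim \log k$. Plugging in makes the ratio $\log F(n_k) \llog n_k / \log n_k$ tend to $A - \eps$, and the arbitrariness of $\eps$ finishes this half.

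For the upper bound $\limsup \le A$, given $n = p_1^{a_1} \cdots p_r^{a_r}$, I would cut the primes at a threshold $y = y(n)$ and split
\begin{equation*}
\log F(n) = \sum_{p_i \le y} \log f(a_i) + \sum_{p_i > y} \log f(a_i) =: S_1 + S_2.
\end{equation*}
The large-prime sum is handled by the pointwise bound $\log f(a) \le A a$, giving $S_2 \le A \sum_{p_i > y} a_i \le A \log n / \log y$. The small-prime sum is handled by $f(a) \ll a^\beta$ together with $a_i \le \log n / \log 2$, which together yield $\log f(a_i) \ll \llog n$ and therefore $S_1 \ll \pi(y)\llog n$.

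The one delicate point is the choice of $y$: to make the constant in $S_2$ come out exactly as $A$ (rather than $A/(1-\delta)$ for some slack $\delta$), we need $\log y / \llog n \to 1$; at the same time $\pi(y) \llog n$ must be $o(\log n/\llog n)$. Taking $y := \log n/(\llog n)^2$ realises both requirements, since then $\log y = \llog n - 2\lllog n \sim \llog n$ and $\pi(y) \ll \log n/(\llog n)^3$, so $S_1 \ll \log n/(\llog n)^2 = o(\log n/\llog n)$ while $S_2 \le (1+o(1)) A \log n/\llog n$. Adding the two contributions and multiplying through by $\llog n/\log n$ completes the proof.
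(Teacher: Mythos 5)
The paper itself does not prove this lemma --- its ``proof'' is only a reference to Suryanarayana and Sita Rama Chandra Rao (1975) --- so your argument stands on its own, and it is essentially correct and self-contained. The lower bound via $n_k=(p_1\cdots p_k)^{a_0}$, with $\log F(n_k)=k\log f(a_0)$, $\log n_k=a_0\vartheta(p_k)\sim a_0k\log k$ and $\llog n_k\sim\log k$, and the upper bound via the split at $y=\log n/(\llog n)^2$, giving $S_1\ll\pi(y)\llog n\ll\log n/(\llog n)^2$ and $S_2\le A\log n/\log y=(1+o(1))A\log n/\llog n$, fit together exactly as you describe; this is the classical maximal-order argument and is very much in the spirit of the cited source, so what you gain is an elementary proof where the paper only cites. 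Three small remarks. First, along $n_k$ the ratio tends to $\log f(a_0)/a_0>A-\eps$, not to $A-\eps$ itself --- harmless, since arbitrariness of $\eps$ still gives $\limsup\ge A$. Second, the step $A\sum_{p_i>y}a_i\le A\log n/\log y$ silently uses $A\ge0$; if $A<0$ the inequality reverses, and in fact the lemma as literally stated can fail then (take $f(a)=e^{-a}$, so $A=-1$, while $\log F(n)\llog n/\log n=-\Omega(n)\llog n/\log n$ has $\limsup$ equal to $0$ along primes), so the statement implicitly presupposes some $f(a)\ge1$; in the paper's application $f=\lambda\ge1$ and $A=\log4/5>0$, so your proof applies verbatim, but you should state the assumption $A\ge0$ explicitly. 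Third, your claim that the supremum is attained holds only when $A>0$ and is never used, so it is best dropped.
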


\begin{proof}
See \cite{suryanarayana1975}.
\end{proof}

\begin{lemma}\label{l:log-sum}
Let $f(t)\ge 0$. If
$$ \int_1^T f(t) \, dt \ll g(T), $$
where $g(T) = T^\alpha \log^\beta T$, $\alpha\ge 1$,
then
\begin{equation*}
I(T):= \int_1^T {f(t)\over t} dt \ll
\left\{ \begin{matrix}
\log^{\beta+1} T & \text{if } \alpha=1, \\
T^{\alpha-1} \log^{\beta} T & \text{if } \alpha>1.
\end{matrix} \right.
\end{equation*}
\end{lemma}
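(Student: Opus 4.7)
The plan is to reduce the weighted integral to the unweighted one by integration by parts. Set $F(t) := \int_1^t f(u)\,du$; by hypothesis $F(t) \ll g(t) = t^\alpha \log^\beta t$, and $F(1)=0$. Since $f(t) = F'(t)$ almost everywhere (and $f \ge 0$, so $F$ is absolutely continuous), integration by parts gives
$$
I(T) = \int_1^T \frac{dF(t)}{t} = \frac{F(T)}{T} - \frac{F(1)}{1} + \int_1^T \frac{F(t)}{t^2} dt = \frac{F(T)}{T} + \int_1^T \frac{F(t)}{t^2} dt.
$$
Both resulting terms can be estimated directly from the bound on $F$.

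For $\alpha = 1$, the boundary term is $F(T)/T \ll \log^\beta T$, while the integral becomes $\int_1^T F(t)/t^2\, dt \ll \int_1^T \log^\beta t\, (dt/t)$. Substituting $u = \log t$ turns this last integral into $\int_0^{\log T} u^\beta\, du \ll \log^{\beta+1} T$, which dominates the boundary term.

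For $\alpha > 1$, the boundary term is already $F(T)/T \ll T^{\alpha-1} \log^\beta T$, and the integral becomes $\int_1^T t^{\alpha-2} \log^\beta t\, dt$. Since $\alpha - 1 > 0$, the integrand is essentially monotone increasing for large $t$, and a routine estimate (for instance, splitting at $T/2$ or applying one further integration by parts) shows this is $\ll T^{\alpha-1} \log^\beta T$ as well.

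There is no real obstacle here: the only mild care needed is to ensure $F(1) = 0$ so that the lower boundary term vanishes, and to verify in the case $\alpha > 1$ that the tail of $\int_1^T t^{\alpha-2} \log^\beta t\, dt$ genuinely dominates the contribution near $t=1$, which is immediate since $\alpha - 1 > 0$. A purely combinatorial alternative, which avoids integration by parts altogether, is a dyadic decomposition: on $[2^k, 2^{k+1}]$ one has $1/t \le 2^{-k}$, so $\int_{2^k}^{2^{k+1}} f(t)/t\, dt \ll 2^{-k} g(2^{k+1}) \ll 2^{k(\alpha-1)} k^\beta$; summing over $k \le \log_2 T$ reproduces the same two bounds (geometric series in $k$ when $\alpha > 1$, arithmetic-in-$k^\beta$ sum when $\alpha = 1$).
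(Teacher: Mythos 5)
Your proof is correct, but your main route differs from the paper's. The paper argues purely by a dyadic decomposition of $[1,T]$ into intervals $[T/2^{k+1},T/2^k]$, bounding $1/t$ by $2^{k+1}/T$ on each piece and invoking the hypothesis $\int_1^{T/2^k} f(t)\,dt \ll g(T/2^k)$, then summing the resulting series — essentially the "purely combinatorial alternative" you sketch at the end (with intervals anchored at $T$ rather than at $1$, which makes no difference). Your primary argument instead uses partial integration: with $F(t)=\int_1^t f(u)\,du$ you write $I(T)=F(T)/T+\int_1^T F(t)t^{-2}\,dt$ and estimate both terms from $F(t)\ll t^\alpha\log^\beta t$; the case analysis in $\alpha$ is handled correctly, and the remaining elementary integral $\int_1^T t^{\alpha-2}\log^\beta t\,dt \ll T^{\alpha-1}\log^\beta T$ for $\alpha>1$ is routine as you indicate. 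The integration-by-parts version is slightly slicker and isolates exactly where the hypothesis enters (through $F$ alone), at the mild cost of needing $F$ to be absolutely continuous and the boundary term at $t=1$ to vanish, which you note; the paper's dyadic splitting is more pedestrian but completely self-contained and needs nothing beyond nonnegativity of $f$ and the assumed bound. Both arguments share the same implicit conventions as the paper (the bound $\int_1^T f\,dt\ll g(T)$ is used at all dyadic points, and $\beta$ is tacitly such that no divergence occurs near $t=1$, e.g.\ $\beta\ge0$ as in all applications), so neither is worse off in that respect.
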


\begin{proof}
Let us divide the interval of integration into parts:
$$
I(T)
\le
\sum_{k=0}^{\log_2 T}
\int_{T/2^{k+1}}^{T/2^k} {f(t)\over t} dt
<
\sum_{k=0}^{\log_2 T} {1\over T/2^{k+1}}
\int_1^{T/2^k} f(t) dt
\ll
\sum_{k=0}^{\log_2 T} {g(T/2^{k})\over T/2^{k+1}}.
$$
Now the lemma's statement follows from elementary estimates.
\end{proof}

\begin{lemma}\label{l:order-in-the-critical-strip}
For $\sigma\ge1/2$ and for any exponent pair $(k,l)$ such that $l-k \ge \sigma$  we have
$$
\mu(\sigma) \le {k+l-\sigma\over2} + \eps.
$$
\end{lemma}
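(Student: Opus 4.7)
The plan is to combine the approximate functional equation for $\zeta(s)$ with the standard exponent-pair bound on dyadic exponential sums. Writing $s = \sigma + it$ with $1/2 \le \sigma \le 1$, the balanced approximate functional equation (taking $x = y = \sqrt{t/(2\pi)}$) yields
\[
\zeta(s) = \sum_{n \le \sqrt{t/(2\pi)}} n^{-s} + \chi(s)\sum_{n \le \sqrt{t/(2\pi)}} n^{s-1} + O(t^{-\sigma/2}),
\]
with $|\chi(s)| \asymp t^{1/2-\sigma}$ by Stirling. This reduces the problem to bounding two sums of length $\sqrt{t}$.

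Next, I would decompose each sum into $O(\log t)$ dyadic blocks $N < n \le 2N$ with $N \le \sqrt{t}$. For the first sum, partial summation against the monotone weight $n^{-\sigma}$ reduces the block $S_N := \sum_{N<n\le 2N} n^{-s}$ to the pure exponential sum $\sum_{N<n\le 2N} e^{-it\log n}$. Its phase $f(n) = -(t/2\pi)\log n$ satisfies $|f^{(r)}(n)| \asymp t/N^r$, so it qualifies for treatment by the exponent pair $(k,l)$, which yields $(t/N)^k N^l = t^k N^{l-k}$ for the exponential sum. Hence
\[
|S_N| \ll N^{-\sigma}\cdot t^k N^{l-k} = t^k N^{l-k-\sigma}.
\]

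The hypothesis $l - k \ge \sigma$ is exactly what makes $N \mapsto N^{l-k-\sigma}$ nondecreasing, so the maximum over dyadic $N \le \sqrt{t}$ is attained at $N \asymp \sqrt{t}$, giving $t^k \cdot t^{(l-k-\sigma)/2} = t^{(k+l-\sigma)/2}$. A parallel treatment of the second sum (weight $n^{\sigma-1}$, block bound $t^k N^{l-k+\sigma-1}$, where $l-k+\sigma-1 \ge 2\sigma-1 \ge 0$ for $\sigma \ge 1/2$) combined with $|\chi(s)| \ll t^{1/2-\sigma}$ produces the same final bound $t^{(k+l-\sigma)/2}$. Summing over the $O(\log t)$ dyadic blocks costs only an $\eps$ in the exponent, and passing to the $\limsup$ that defines $\mu(\sigma)$ yields the claim.

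The main technical hurdle I expect is verifying that the phase $f(n)=-(t/2\pi)\log n$ together with the smooth weights $n^{\pm\sigma}$ really do satisfy the regularity conditions needed to apply the exponent-pair definition to each dyadic block uniformly, and that partial summation does not erode the saving $(t/N)^k N^l$. Both are classical and routine in the exponent-pair literature (cf.\ Graham--Kolesnik); the substance of the argument lies in choosing the balanced form of the approximate functional equation and in recognising that the condition $l - k \ge \sigma$ is precisely the monotonicity hypothesis that moves the maximum in the dyadic bound to the top of the range.
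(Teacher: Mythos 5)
Your argument is correct: it is precisely the classical proof of the bound the paper invokes, which is proved by citing Ivi\'c's equation (7.57) — the balanced approximate functional equation, dyadic decomposition with partial summation against the monotone weights $n^{-\sigma}$ and $n^{\sigma-1}$, the exponent-pair bound $t^k N^{l-k}$ on each block, and the observation that $l-k\ge\sigma$ pushes the maximum to $N\asymp\sqrt t$, giving $t^{(k+l-\sigma)/2}$ from both sums after inserting $|\chi(s)|\ll t^{1/2-\sigma}$. Since the paper offers no independent argument beyond the citation, your write-up simply reconstructs the standard proof behind that reference, and all the steps (including the $\sigma\ge1/2$ check for the second sum) are in order.
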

\begin{proof}
See \cite[(7.57)]{ivic2003}.
\end{proof}

A well-known application of Lemma~\ref{l:order-in-the-critical-strip} is
\begin{equation}\label{eq:mu-1/2}
\mu(1/2) \le 32/205,
\end{equation}
following from the choice $(k,l) = H_{2005}$. Another (maybe new) application is
\begin{equation}\label{eq:mu-3/5}
\mu(3/5) \le 1409 / 12170,
\end{equation}
following from
$$
(k,l) = \left( {269\over 2434}, {1755\over2434} \right) = ABAH_{2005},
$$
where $A$ and $B$ stands for usual $A$- and $B$-processes \cite[Ch. 2]{kratzel1988}.

\begin{lemma}\label{l:phragmen}
Let~$\eta>0$ be arbitrarily small. Then for growing $|t|\ge3$
\begin{equation}\label{eq:convexity}
\zeta(s) \ll \begin{cases}
|t|^{1/2 - (1-2\mu(1/2))\sigma}, & \sigma\in[0, 1/2],
\\
|t|^{2\mu(1/2)(1-\sigma)} , & \sigma\in[1/2, 1-\eta], \\
|t|^{2\mu(1/2)(1-\sigma)} \log^{2/3} |t| , & \sigma\in[1-\eta, 1], \\
\log^{2/3} |t|, & \sigma\ge1.
\end{cases}
\end{equation}
More exact estimates for $\sigma\in[1/2, 1-\eta]$ are also available, e. g.
\begin{equation}\label{eq:convexity-3/5}
\mu(\sigma) \ll \begin{cases}
10\bigr( \mu(3/5)-\mu(1/2) \bigl) \sigma + \bigl( 6\mu(1/2) - 5\mu(3/5) \bigr), & \sigma\in[1/2, 3/5], \\
5\mu(3/5)(1-\sigma)/2 , & \sigma\in[3/5,1-\eta], \\
\end{cases}
\end{equation}
\end{lemma}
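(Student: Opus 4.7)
My plan is to deduce the lemma from the Phragmén--Lindelöf convexity principle applied to $\zeta(s)$ on short vertical strips, fed by the four reference values
$$
\mu(0)=1/2, \qquad \mu(1/2)\le 32/205, \qquad \mu(3/5)\le 1409/12170, \qquad \mu(1)=0.
$$
The fourth, in its refined form $\zeta(1+it)\ll \log^{2/3}|t|$, is the Vinogradov--Korput estimate coming from the Vinogradov zero-free region; for $\sigma\ge1$ the Dirichlet series converges absolutely and the same bound extends. The first comes from the functional equation $\zeta(s)=\chi(s)\zeta(1-s)$ with $|\chi(s)|\asymp|t|^{1/2-\sigma}$, reflecting the value at $\sigma=1$ back to $\sigma=0$. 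The middle two are just \eqref{eq:mu-1/2} and \eqref{eq:mu-3/5}.

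\medskip

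On each strip I would apply the convexity principle to the entire function $(s-1)\zeta(s)$ (the factor $(s-1)$ removes the pole), divided by an auxiliary factor $|t|^{\ell(\sigma)}$ that linearly interpolates the two boundary orders. The conclusion is that $\log|\zeta(\sigma+it)|/\log|t|$ is bounded, for large $|t|$, by the linear interpolant of $\mu$ between the two endpoint abscissae. Applied successively on $[0,1/2]$, $[1/2,1]$, $[1/2,3/5]$ and $[3/5,1-\eta]$, this directly produces the four cases of \eqref{eq:convexity} and both halves of \eqref{eq:convexity-3/5} after elementary arithmetic; for instance, the interpolant on $[1/2,1]$ between $\mu(1/2)$ and $0$ is $2\mu(1/2)(1-\sigma)$, and the interpolant on $[3/5,1]$ between $\mu(3/5)$ and $0$ is $5\mu(3/5)(1-\sigma)/2$, matching the stated formulas exactly.

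\medskip

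The only step that is not pure convexity is preserving the logarithmic factor $\log^{2/3}|t|$ throughout the narrow strip $\sigma\in[1-\eta,1]$; this is the main obstacle, though a mild one. I would handle it by re-running the convexity argument for the function $\zeta(s)/\log^{2/3}(2+|t|)$, or equivalently by observing that at $\sigma=1$ the Vinogradov--Korput bound provides the logarithmic factor while at $\sigma=1-\eta$ the pure power bound is known; a version of Phragmén--Lindelöf with a slowly varying weight then lets the logarithmic factor persist across such a thin strip. For $\sigma\in[1/2,1-\eta]$, bounded away from $1$, no such refinement is required and the plain linear interpolation is enough; for $\sigma\in[0,1/2]$ the bound follows as described from the interpolation between $\mu(0)=1/2$ and $\mu(1/2)$.
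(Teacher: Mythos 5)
Your argument is correct and is essentially the same as the paper's, which simply cites the Phragm\'en--Lindel\"of principle, the functional equation and convexity properties of $\zeta(s)$ (Titchmarsh Ch.~5, Ivi\'c Ch.~7.5); your linear interpolations between $\mu(0)=1/2$, $\mu(1/2)$, $\mu(3/5)$ and the Vinogradov--Korobov bound at $\sigma=1$ reproduce each stated case, including the reflection via $|\chi(s)|\asymp|t|^{1/2-\sigma}$ for $\sigma\in[0,1/2]$ and the weighted convexity step carrying $\log^{2/3}|t|$ across $[1-\eta,1]$. No substantive difference from the paper's intended proof.
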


\begin{proof}
Estimates follow from Phragmén---Lindelöf principle, exact and approximate functional equations for $\zeta(s)$ and convexity properties.  See \cite[Ch. 5]{titchmarsh1986} and~\cite[Ch. 7.5]{ivic2003} for details.
\end{proof}

\begin{lemma}\label{l:lae-maximal-order}
For any integer $r$
$$
\max_{n\le x} \lae_r(n) \ll x^\eps.
$$
\end{lemma}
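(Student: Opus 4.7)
The plan is to reduce to the case $r=1$ and then invoke Lemma~\ref{l:rational-maximal-order}. Since $\delta_r(n)\in\{0,1\}$, we have $\lae_r(n)\le\lae(n)$ for every $n$, so it suffices to prove $\max_{n\le x}\lae(n)\ll x^\eps$.

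Now $\lae$ is multiplicative with $\lae(p^a)=\lambda(a)$, and trivially $\lambda(a)\le\phi(a)\le a$, so the hypothesis of Lemma~\ref{l:rational-maximal-order} holds with $f=\lambda$ and $\beta=1$. The lemma therefore yields
$$
\limsup_{n\to\infty}\frac{\log\lae(n)\,\llog n}{\log n}=\sup_{n\ge 1}\frac{\log\lambda(n)}{n}.
$$

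The right-hand side is a \emph{finite} constant: from $\lambda(n)\le n$ we get $\log\lambda(n)/n\le\log n/n$, and the function $\log n/n$ is bounded on $n\ge 1$ (it attains its maximum at $n=3$). Call this constant $C$. Then for every $\eps>0$ and all sufficiently large $n$ we have $\log\lae(n)\le(C+o(1))\log n/\llog n<\eps\log n$, i.e.\ $\lae(n)\ll n^\eps\le x^\eps$ whenever $n\le x$.

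There is no real obstacle: once one notices that $\lae_r\le\lae$ and that Lemma~\ref{l:rational-maximal-order} is tailor-made for exactly this situation, the only thing to check is the trivial bound $\lambda(a)\le a$ and the elementary fact that $\log n/n$ is bounded — after that the maximal-order inequality $\lae(n)\ll n^{C/\llog n}$ immediately absorbs into $x^\eps$.
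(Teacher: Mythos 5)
Your proof is correct and follows essentially the same route as the paper: reduce to $\lae$ via $\lae_r\le\lae$, apply Lemma~\ref{l:rational-maximal-order} with $f=\lambda$, and absorb the resulting maximal order $n^{O(1/\llog n)}$ into $x^\eps$. The only difference is cosmetic: the paper evaluates $\sup_m \log\lambda(m)/m$ exactly as $\log 4/5$, while you only use its finiteness, which suffices.
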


\begin{proof}
Surely $\lae_r(n) \le \lae(n)$. By Lemma~\ref{l:rational-maximal-order} we have
$$
\limsup_{n\to\infty} {\log \lae(n) \log\log n \over \log n}
= \sup_m {\log \lambda(m) \over m} = {\log 4\over 5} =: c,
$$
because $\lambda(m) \le m-1$. It implies
$$
\max_{n\le x} \lae(n) \ll x^{c/\log\log n} \ll x^\eps.
$$
\end{proof}

\begin{lemma}\label{l:lae-series}
Let $L_r(s)$ be the Dirichlet series for $\lae_r$:
$$
L_r(s) := \sum_{n=1}^\infty \lae_r(n) n^{-s}.
$$
Then for $r=1,2,3,4$ we have $L_r(s) = Z_r(s) G_r(s)$, where
\begin{align}
\label{eq:lae-series-1}
Z_1(s) &= \zeta(s) \zeta(3s) \zeta^2(5s), \\
\label{eq:lae-series-2}
Z_2(s) &= \zeta(2s) \zeta^2(3s) \zeta(4s) \zeta^2(5s), \\
\label{eq:lae-series-3}
Z_3(s) &= \zeta^2(3s) \zeta^2(4s) \zeta^4(5s), \\
\label{eq:lae-series-4}
Z_4(s) &= \zeta^2(4s) \zeta^4(5s) \zeta^2(6s) \zeta^6(7s),
\end{align}
Dirichlet series $G_1(s)$, $G_2(s)$, $G_3(s)$ converge absolutely for $\sigma>1/6$ and $G_4(s)$ converges absolutely for $\sigma>1/8$.
\end{lemma}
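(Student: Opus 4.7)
The plan is to exploit multiplicativity of $\lae_r$ together with the fact that $\lae_r(p^a) = \delta_r(p^a) \lambda(a)$ depends only on the exponent $a$, not on the prime $p$. Consequently every Euler factor of $L_r(s)$ equals $F_r(p^{-s})$ with
$$F_r(x) := 1 + \sum_{a \ge r} \lambda(a)\, x^a,$$
and the same uniformity in $p$ holds for the proposed zeta product $Z_r(s)$. So the problem collapses to matching a single formal power series in $x = p^{-s}$ against a rational function of $x$.

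First I would tabulate small values of $\lambda$, for instance $\lambda(1), \ldots, \lambda(9) = 1, 1, 2, 2, 4, 2, 6, 2, 6$, and for each $r$ expand the conjectured local factor $Z_r^{(p)}(x)$, e.~g.\ $(1-x)^{-1}(1-x^3)^{-1}(1-x^5)^{-2}$ for $r=1$, as a power series to sufficient order. A direct term-by-term comparison will show that $F_r$ and $Z_r^{(p)}$ agree through $x^5$ when $r = 1,2,3$ and through $x^7$ when $r = 4$. As a sanity check: in $Z_1$ the factor $\zeta^2(5s)$ appears precisely because $(1-x)^{-1}(1-x^3)^{-1}$ already contributes $2x^5$, and $\lambda(5)=4$ demands two more copies; analogous identities fix the multiplicities of $\zeta(3s)$, $\zeta(4s)$ in $Z_2,Z_3$ and of $\zeta(7s)$ in $Z_4$.

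Since $(Z_r^{(p)}(x))^{-1}$ is a fixed polynomial with integer coefficients (independent of $p$), while $F_r$ has coefficients bounded by $\lambda(a) \le a$, the quotient $G_r^{(p)}(x) := F_r(x)/Z_r^{(p)}(x)$ is a well-defined formal power series of the shape $1 + O(x^N)$, with $N = 6$ in the first three cases and $N = 8$ in the fourth, and with all its coefficients bounded uniformly in $p$. Setting $G_r(s) := \prod_p G_r^{(p)}(p^{-s})$ then yields the factorisation $L_r(s) = Z_r(s) G_r(s)$, and the routine bound $|\log G_r(s)| \ll \sum_p p^{-N\sigma}$ gives absolute convergence for $\sigma > 1/N$, i.~e.\ for $\sigma > 1/6$ when $r = 1,2,3$ and $\sigma > 1/8$ when $r = 4$.

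The main difficulty is purely mechanical: correctly identifying which zeta factors must appear in each $Z_r$, and with what multiplicities, by tracking convolution-multiplicities of the geometric series $(1-x^k)^{-1}$ against the target coefficients $\lambda(r), \lambda(r+1), \ldots$ Once the right combination has been guessed, verification is a finite computation in $\Z[x]$.
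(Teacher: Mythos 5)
Your proposal is correct and is essentially the paper's own proof: both expand the Euler local factor $1+\sum_{a\ge r}\lambda(a)x^a$ (with $x=p^{-s}$), match it against the rational local factor of $Z_r$, and conclude that the quotient is $1+O(x^6)$ for $r=1,2,3$ and $1+O(x^8)$ for $r=4$, giving absolute convergence of $G_r$ in $\sigma>1/6$, resp.\ $\sigma>1/8$. Incidentally, for $r=1$ your claim of agreement only through $x^5$ is the accurate one (the coefficient of $x^6$ in $(1-x)^{-1}(1-x^3)^{-1}(1-x^5)^{-2}$ is $5\ne\lambda(6)=2$, so the numerator is $1+O(x^6)$ rather than the paper's stated $1+O(x^8)$), which still yields exactly the convergence region claimed in the lemma.
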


\begin{proof}
Follows from the identities
\begin{align*}
1+\sum_{a\ge1} \lae(p^a) x^a &=
	1+x+x^2+2x^3+2x^4+4x^5+2x^6+6x^7+O(x^8) \\
	&= {1+O(x^8) \over (1-x)(1-x^3)(1-x^5)^2}, \\
1+\sum_{a\ge2} \lae(p^a) x^a &=
	{1+O(x^6) \over (1-x^2)(1-x^3)^2(1-x^4)(1-x^5)^2}, \\
1+\sum_{a\ge3} \lae(p^a) x^a &=
	{1+O(x^6) \over (1-x^3)^2 (1-x^4)^2 (1-x^5)^4}, \\
1+\sum_{a\ge4} \lae(p^a) x^a &=
	{1+O(x^8) \over (1-x^4)^2 (1-x^5)^4 (1-x^6)^2 (1-x^7)^6}.
\end{align*}
\end{proof}

\begin{lemma}\label{l:kratzel}
Let $\Delta(x)$ be the error term in the well-known asymptotic for\-mu\-la for $\sum_{n\le x} \tau(a_1,a_2,a_3,a_4; n)$, let $A_4 = a_1+a_2+a_3+a_4$  and let $(k,l)$ be any exponent pair. Suppose that the following conditions are satisfied:
\begin{enumerate}
\item $(k+l+2) a_4  < (k+l) a_1 + A_4$.
\item $2(k+l+1) a_1 \le (2k+1) (a_2+a_3)$.
\item[(3.1)] $l a_1 \le k a_2$ and $(k+l+1)a_1 \ge k(a_2+a_3)$
\item[\em or]
\item[(3.2)] $l a_1 \ge k a_2$ and $(l-k)(2k+1)a_3 \le (2l-2l-1)(k+l+1)a_1 + \bigl( 2k(k-l+1) + 1 \bigr) a_2$.
\end{enumerate}
\end{lemma}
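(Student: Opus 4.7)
The plan is to invoke Kr\"atzel's framework for multidimensional (generalized) divisor problems, treating $\sum_{n\le x} \tau(a_1,a_2,a_3,a_4;n)$ as a four-dimensional lattice-point problem attached to the Dirichlet series $\zeta(a_1 s)\zeta(a_2 s)\zeta(a_3 s)\zeta(a_4 s)$. First, I would apply Perron's formula and shift the contour past the rightmost pole (at $s=1/a_1$ under the assumption $a_1\le a_2\le a_3\le a_4$), writing the quantity as main term plus an error term $\Delta(x)$ whose size is controlled by a hyperbolic sum of the form $\sum_{d_2,d_3,d_4}\psi\bigl((x/d_2^{a_2}d_3^{a_3}d_4^{a_4})^{1/a_1}\bigr)$, where $\psi$ is the usual sawtooth function.

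Second, I would decompose this hyperbolic sum into $O(\log^3 x)$ dyadic boxes $d_j\asymp D_j$. Inside each box, the $d_1$-summation produces an exponential sum whose phase $x^{1/a_1} (d_2^{a_2}d_3^{a_3}d_4^{a_4})^{-1/a_1}$ satisfies the standard monomial derivative conditions, so the $(k,l)$-process from Chapter~2 of~\cite{kratzel1988} applies and yields a bound depending on the exponents $a_j$ and on $(k,l)$. Then I would balance this estimate against the trivial bound $D_2 D_3 D_4$ on the volume of the box.

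Third, conditions (1)--(2) are precisely what guarantees that the resulting exponent remains admissible in the most voluminous box (the one where $D_2,D_3,D_4$ are maximal), while the alternative (3.1)/(3.2) splits according to whether the optimization in the intermediate boxes is attained on the boundary $D_2\asymp1$ or in the interior; in each sub-case the system of linear inequalities in $\log D_j$ is solvable and yields the same ultimate exponent. Once the dyadic optimization is carried out, the whole proof collapses into verifying that the hypotheses imported from~\cite[Ch.~6]{kratzel1988} are satisfied, after which the quoted error-term bound is immediate.

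The main obstacle is the bookkeeping of the dyadic optimization: conditions (1), (2), (3.1), (3.2) are not transparent and one must carefully check that they exactly match the inequalities used by Kr\"atzel. Since this lemma is essentially a specialization of Kr\"atzel's general theorem on four-dimensional divisor problems to our situation, I would ultimately present the proof as a direct appeal to~\cite{kratzel1988}, with a short remark indicating which theorem is being invoked and how the parameters $a_1,\dots,a_4$ and $(k,l)$ are matched to those in the reference.
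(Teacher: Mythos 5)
Your proposal ends exactly where the paper's proof begins and ends: a direct appeal to Krätzel, with the surrounding Perron/dyadic/exponent-pair sketch serving only as background for what his theorem encapsulates, so the approach is essentially the same. The only correction is the source: the paper cites \cite[Th.~3]{kratzel1992} with $p=4$ (the 1992 paper on the four-dimensional divisor problem, whose conditions (1), (2), (3.1)/(3.2) are quoted verbatim), not Chapter~6 of the book \cite{kratzel1988}.
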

\begin{proof}
This is \cite[Th. 3]{kratzel1992} with $p=4$.
\end{proof}

\begin{lemma}\label{l:ivic-moments-on-sigma}
$$
m(\sigma) \ge \begin{cases}
\p\p\p\p4/(3-4\sigma), & \p1/2\p \le \sigma \le  \p5/8,  \\
\p\p\p10/(5-6\sigma), & \p5/8\p \le \sigma \le 35/54, \\
\p\p\p19/(6-6\sigma), & 35/54 \le \sigma \le 41/60, \\
\p2112/(859-948\sigma), & 41/60 \le \sigma \le \p3/4, \\
12408/(4537-4890\sigma), & \p3/4\p \le \sigma \le \p5/6, \\
\p4324/(1031-1044\sigma), & \p5/6\p \le \sigma \le \p7/8, \\
\p\p\p98/(31-32\sigma), & \p7/8\p \le \sigma \le 0.91591\ldots, \\
(24\sigma-9)/(4\sigma-1)(1-\sigma), & 0.91591\ldots \le \sigma \le 1-\eps.
\end{cases}
$$
\end{lemma}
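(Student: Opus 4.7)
The plan is to quote the systematic catalogue of mean-value theorems for $\zeta(s)$ along vertical lines $\Re s=\sigma$, as assembled in Chapter~8 of Ivi\'c's monograph. The lemma is a purely expository statement, and each piece of the piecewise bound corresponds to a specific mean-value result already proved in the analytic number theory literature.

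The building blocks are the classical fourth-power moment $\int_1^T |\zeta(1/2+it)|^4 \, dt \ll T\log^4 T$, which yields $m(1/2)\ge 4$ and matches the first branch $4/(3-4\sigma)$ at $\sigma=1/2$; the sixth- and twelfth-moment theorems due to Heath-Brown, Huxley and Ivi\'c, which control the middle range; and Ramachandra's mean-value bounds which operate near $\sigma=1$. Each estimate is sharp on a specific subinterval of $\sigma$, and the function tabulated in the lemma is effectively the pointwise maximum of all these bounds; the breakpoints $5/8$, $35/54$, $41/60$, $3/4$, $5/6$, $7/8$, $0.91591\ldots$ are precisely the $\sigma$-values where two successive estimates agree. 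Between raw moments one interpolates via H\"older's inequality, using the trivial bound $\zeta(1+it)\ll\log t$ at the high-$\sigma$ endpoint, which explains the linear-fractional form $A/(B-C\sigma)$ of each branch.

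The main obstacle is not any individual analytic step --- those are delegated to the literature --- but bookkeeping across sources: one must verify that no stronger bound has become available on some subinterval and that the rational constants $2112$, $12408$, $4324$, $98$, $\ldots$ transcribe correctly. Given Ivi\'c's catalogue, the proof reduces essentially to a single cross-reference.
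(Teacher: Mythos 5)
Your proposal matches the paper: the lemma is proved there by a bare citation to Theorem~8.4 of Ivi\'c's monograph, which is exactly the Chapter~8 catalogue of mean-value estimates you describe. Your additional remarks on how the individual branches arise (fourth moment, higher-moment theorems, interpolation) are consistent with Ivi\'c's derivation, so the proposal is correct and takes essentially the same route.
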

\begin{proof}
See \cite[Th. 8.4]{ivic2003}.
\end{proof}

\section{Main results}

\begin{theorem}\label{th:lae-1}
$$
\sum_{n\le x} \lae(n)
= c_{11} x + c_{13} x^{1/3} + (c_{15}' \log x + c_{15}) x^{1/5}
+ O(x^{1153/6073+\eps}),
$$
where $c_{11}$, $c_{13}$, $c_{15}$ and $c_{15}'$ are computable constants.
\end{theorem}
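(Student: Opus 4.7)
The plan is the convolution method suggested by Lemma~\ref{l:lae-series}. Writing $L_1(s) = U(s)G_1(s)$ with $U(s) := \zeta(s)\zeta(3s)\zeta^2(5s) = \sum_{n\ge1}\tau(1,3,5,5;n)\,n^{-s}$, I translate this into the Dirichlet convolution $\lae = \tau(1,3,5,5;\cdot) \star g$, where $g$ has Dirichlet series $G_1$. The absolute convergence of $G_1$ for $\Re s > 1/6$ gives
$$ \sum_{d\le y}|g(d)|\ll y^{1/6+\eps},\qquad \sum_{d>y}|g(d)|/d^\sigma \ll y^{1/6-\sigma+\eps}\quad(\sigma>1/6), $$
and therefore
$$ \sum_{n\le x}\lae(n) = \sum_{d\le x} g(d)\,T(x/d),\qquad T(y):=\sum_{n\le y}\tau(1,3,5,5;n). $$

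The first ingredient is an asymptotic for $T(y)$ with three main terms reflecting the rightmost singularities of $U(s)$ (simple poles at $s=1$ and $s=1/3$, and a double pole at $s=1/5$ from $\zeta^2(5s)$):
$$T(y) = \gamma_1 y + \gamma_2 y^{1/3} + (\gamma_3'\log y + \gamma_3)\, y^{1/5} + \Delta(y).$$
The main obstacle is to establish $\Delta(y) \ll y^{1153/6073+\eps}$. I would invoke Lemma~\ref{l:kratzel} with $(a_1,a_2,a_3,a_4)=(1,3,5,5)$; since $l>3k$ for $H_{2005}$ the clause~(3.1) is inapplicable, and one must choose an exponent pair satisfying conditions~1,~2, and~(3.2). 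Such a pair is obtained by descending from $H_{2005}$ through a suitable sequence of $A$- and $B$-processes (cf.~Lemma~\ref{l:order-in-the-critical-strip}); verifying feasibility and carrying out the optimisation over admissible pairs is the technical heart of the argument and produces precisely the exponent $1153/6073$.

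Finally, I convolve back. The main-term contribution
$$\sum_{d\le x} g(d)\Bigl[\gamma_1\tfrac{x}{d} + \gamma_2(\tfrac{x}{d})^{1/3} + \bigl(\gamma_3'\log(\tfrac{x}{d}) + \gamma_3\bigr)(\tfrac{x}{d})^{1/5}\Bigr]$$
is completed to an infinite sum in $d$ using the tail bounds above; this yields the stated main terms, with $c_{11}=\gamma_1 G_1(1)$, $c_{13}=\gamma_2 G_1(1/3)$, $c_{15}'=\gamma_3' G_1(1/5)$, and $c_{15}$ a linear combination of $G_1(1/5)$ and $G_1'(1/5)$. Each completion error is bounded by $O(x^{1/6+\eps})$ and hence absorbed. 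For the error part, the series $\sum_d |g(d)|/d^{1153/6073}$ converges because $1153/6073>1/6$, so $\sum_{d\le x}|g(d)|(x/d)^{1153/6073+\eps}\ll x^{1153/6073+\eps}$, completing the proof.
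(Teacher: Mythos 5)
Your proposal follows essentially the same route as the paper: the convolution $\lae = \tau(1,3,5,5;\cdot)\star g_1$ coming from Lemma~\ref{l:lae-series}, Krätzel's three-term asymptotic for $\sum_{n\le y}\tau(1,3,5,5;n)$ with the error term bounded through Lemma~\ref{l:kratzel}, and the standard completion of the convolution with the tail bounds from the absolute convergence of $G_1$ for $\sigma>1/6$. One small correction: no descent from $H_{2005}$ via $A$- and $B$-processes is needed, since the paper verifies that $H_{2005}=(32/205+\eps,269/410+\eps)$ itself satisfies conditions 1, 2 and (3.2) of Lemma~\ref{l:kratzel} (only clause (3.1) fails, as you note), and substituting this pair into $(k+l+2)/(k+l+14)$ yields exactly the exponent $1153/6073$.
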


\begin{proof}
Lemma~\ref{l:lae-series} and equation \eqref{eq:lae-series-1} implies that
$
\lae = \tau(1,3,5,5;\cdot) \star g_1
$,
where $\sum_{n\le x} g_1(n) \ll x^{1/6+\eps}$.
Due to \cite{kratzel1988}
\begin{multline*}
\sum_{n\le x} \tau(1,3,5,5; n)
= x \zeta(3) \zeta^2(5) \res_{s=1} \zeta(s)
+ 3 x^{1/3} \zeta(1/3) \zeta^2(5/3) \res_{s=1/3} \zeta(3s)
+
\\
+ 5 x^{1/5} \zeta(1/5) \zeta(3/5) \res_{s=1/5} \zeta^2(5s)
+ R(x).
\end{multline*}
To estimate $R(x)$ we use Lemma~\ref{l:kratzel} with $a_1=1$, $a_2=3$, $a_3=a_4=5$. Exponent pair
$
(k,l) = H_{2005}$ satisfies conditions 1, 2 and 3.2 and thus
$$
R(x) \ll x^{(k+l+2)/(k+l+14)} = x^{1153/6073+\eps},
\qquad
1/6 < 1153/6073 < 1/5.
$$
Now the convolution argument completes the proof.
\end{proof}

Exponential totient function $\phie$ has similar to $\lae$ Dirichlet series:
$$
\sum_{n=1}^\infty \phie(n) = \zeta(s) \zeta(3s) \zeta^2(5s) H(s),
$$
where $H(s)$ converges absolutely for $\sigma>1/6$. Theorem \ref{th:lae-1} can be extended to this case without any changes, so
$$
\sum_{n\le x} \phie(n)
= c_{11} x + c_{13} x^{1/3} + (c_{15}' \log x + c_{15}) x^{1/5}
+ O(x^{1153/6073+\eps}).
$$
This improves the result of Pétermann \cite{petermann2010}, who obtained
$
\sum_{n\le x} \phie(n)
\hm= c_{11} x \hm+ c_{13} x^{1/3} + O(x^{1/5} \log x)
$.

\medskip
{\bf Update from 23.05.2014:} Recently Cao and Zhai \cite[(1.13)]{caozhai2013} obtained $R(x) \hm\ll x^{18/95+\eps}$, which is better than both Pétermann's $x^{1/5} \log x$ and our $x^{1153/6073+\eps}$.

\begin{theorem}\label{th:lae-2}
$$
\sum_{n\le x} \lae_2(n)
= c_{22} x^{1/2} + (c_{23}'\log x + c_{23}) x^{1/3} + c_{24} x^{1/4}
+ O(x^{1153/5586+\eps}),
$$
where $c_{22}$, $c_{23}$, $c_{23}'$ and $c_{24}$ are computable constants.
\end{theorem}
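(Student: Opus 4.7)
The plan is to mimic the proof of Theorem \ref{th:lae-1}, reducing the problem to a four-dimensional generalized divisor function to which Lemma \ref{l:kratzel} applies. By Lemma \ref{l:lae-series} and \eqref{eq:lae-series-2} I first write $\lae_2 = h \star g_2$ with $\sum_{n\le x}|g_2(n)| \ll x^{1/6+\eps}$, where $h$ has Dirichlet series $Z_2(s) = \zeta(2s)\zeta^2(3s)\zeta(4s)\zeta^2(5s)$. Then I factor $Z_2(s) = [\zeta(2s)\zeta^2(3s)\zeta(4s)] \cdot \zeta^2(5s)$, so that $h = \tau(2,3,3,4;\cdot) \star \tau(5,5;\cdot)$.

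The main step is to apply Lemma \ref{l:kratzel} with $(a_1,a_2,a_3,a_4) = (2,3,3,4)$, $A_4 = 12$ and $(k,l) = H_{2005}$. One has $k+l = 333/410$ and $l-k = 1/2$, whence $2l-2k-1 = 0$; a direct numerical check shows that conditions (1), (2) and (3.2) all hold — in (3.2) the first summand on the right vanishes and the remaining inequality reduces to $807/410 \le 1422/410$. The resulting error term has the same form as in Theorem \ref{th:lae-1}, namely
$$
\Delta_{2,3,3,4}(y) \ll y^{(k+l+2)/(a_1(k+l)+A_4)+\eps} = y^{1153/5586+\eps},
$$
with a main-term polynomial $P_{2,3,3,4}$ consisting of terms of orders $y^{1/2}$, $y^{1/3}\log y$, $y^{1/3}$ and $y^{1/4}$, produced by the simple poles of $\zeta(2s)\zeta(4s)$ at $s = 1/2, 1/4$ and the double pole of $\zeta^2(3s)$ at $s = 1/3$.

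Next I convolve with $\tau(5,5;\cdot)$, which is supported on fifth powers with $\tau(5,5;m^5) = d(m)$:
$$
\sum_{n\le y} h(n) = \sum_{m\le y^{1/5}} d(m) \bigl[P_{2,3,3,4}(y/m^5) + \Delta_{2,3,3,4}(y/m^5)\bigr].
$$
Each contribution $y^{1/j}$ or $y^{1/j}\log y$ for $j \in \{2,3,4\}$ extends to an absolutely convergent $m$-series (since $5/j > 1$), yielding the constants $c_{22}$, $c_{23}'$, $c_{23}$, $c_{24}$ plus a tail of order $y^{1/5}\log^2 y$. The double pole of $\zeta^2(5s)$ at $s = 1/5$, not extracted here, contributes at most $y^{1/5}\log y$ overall; this is admissible because $1/5 < 1153/5586$. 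The $\Delta$-part gives
$$
y^{1153/5586+\eps}\sum_{m\le y^{1/5}} d(m) m^{-5765/5586} \ll y^{1153/5586+\eps},
$$
the series being convergent because $5 \cdot 1153/5586 > 1$. Finally, convolving with $g_2$ via $\sum_{n\le x}\lae_2(n) = \sum_{d} g_2(d) H(x/d)$ preserves both the shape of the main terms (each coefficient acquires a factor of a convergent $G_2$-type Dirichlet series value, since $1/4 > 1/6$) and the error exponent (since $1153/5586 > 1/6$).

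The delicate step is the verification of condition (3.2) of Lemma \ref{l:kratzel}: it is precisely the fact that $2l-2k-1=0$ for $H_{2005}$ that makes this pair admissible. A weaker exponent pair would either violate the condition or push the error past $y^{1/5}$, at which point the unextracted $y^{1/5}\log y$ contribution from $\zeta^2(5s)$ would no longer fit into the error budget and an additional main term would have to be extracted by hand.
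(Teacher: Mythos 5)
Your proposal is correct and takes essentially the same route as the paper: reduce via \eqref{eq:lae-series-2} to the four-dimensional divisor function $\tau(2,3,3,4;\cdot)$, apply Lemma~\ref{l:kratzel} with $(k,l)=H_{2005}$ (conditions 1, 2 and 3.2, which you verify explicitly while the paper only asserts them) to get the error exponent $1153/5586$, and finish by the convolution argument. Your explicit handling of the $\zeta^2(5s)$ factor as $\tau(5,5;\cdot)$, with its $x^{1/5}\log x$ contribution absorbed because $1/5<1153/5586$, is actually more careful than the paper's wording, which folds $\zeta^2(5s)$ into $g_2$ while claiming $\sum_{n\le x}g_2(n)\ll x^{1/6+\eps}$; only your closing aside is backwards, since a weaker exponent pair would enlarge the error exponent and the $x^{1/5}\log x$ term would still be absorbed --- an extra main term would be needed only if the error dropped below $x^{1/5}$.
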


\begin{proof}
Similar to Theorem~\ref{th:lae-1} with following changes: now by \eqref{eq:lae-series-2}
$$
\lae_2 = \tau(2,3,3,4; \cdot) \star g_2,
$$
where $\sum_{n\le x} g_2(n) \ll x^{1/6+\eps}$. But
\begin{multline*}
\sum_{n\le x} \tau(2,3,3,4; n)
= 2 x^{1/2} \zeta^2(3/2) \zeta(2) \res_{s=1/2} \zeta(2s)
+
\\
+ 3 x^{1/3} \zeta(2/3) \zeta(4/3) \res_{s=1/3} \zeta^2(3s)
+ 4 x^{1/4} \zeta(1/2) \zeta^2(3/4) \res_{s=1/4} \zeta(4s) + R(s).
\end{multline*}
Again by Lemma~\ref{l:kratzel} with $a_1=2$, $a_2=a_3=3$, $a_4=4$, $(k,l)=H_{2005}$ we get
$$
R(x) \ll x^{(k+l+2)/(k+l+12)} = x^{1153/5586+\eps},
\qquad
1/5 < 1153/5586 < 1/4.
$$
\end{proof}

\begin{theorem}\label{th:lae-3}
\begin{multline}\label{eq:lae-asymp-3}
\sum_{n\le x} \lae_3(n) =
(c_{33}' \log x + c_{33}) x^{1/3}
+ (c_{34}' \log x + c_{34}) x^{1/4}
+\\+ P_{35}(\log x) x^{1/5} + O(x^{1/6+\eps}),
\end{multline}
where $c_{33}$, $c_{33}'$, $c_{34}$ and $c_{34}'$ are computable constants,
$P_{35}$ is a polynomial of degree 3 with computable coefficients.
\end{theorem}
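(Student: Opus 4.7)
The plan is to mirror the structure of Theorems~\ref{th:lae-1} and~\ref{th:lae-2}, but since Lemma~\ref{l:kratzel} applies only to four-variable divisor functions, while $Z_3(s)$ corresponds to the eight-variable $\tau(3,3,4,4,5,5,5,5;\cdot)$, the error term must be obtained analytically from mean-value estimates of $\zeta$.

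First, by Lemma~\ref{l:lae-series} and~\eqref{eq:lae-series-3}, I would write $\lae_3 = h_3 \star g_3$, where $h_3$ has Dirichlet series $Z_3(s) = \zeta^2(3s)\zeta^2(4s)\zeta^4(5s)$ and $\sum_{n\le x}g_3(n) \ll x^{1/6+\eps}$. The function $Z_3$ has a double pole at $s=1/3$, a double pole at $s=1/4$ and a fourth-order pole at $s=1/5$, all lying to the right of the abscissa of absolute convergence of $G_3$. Standard residue calculus for $Z_3(s)x^s/s$ produces main terms of the form $(c'\log x + c)x^\alpha$ from the two double poles and $P_{35}(\log x)x^{1/5}$ with $\deg P_{35}=3$ from the quadruple pole, matching~\eqref{eq:lae-asymp-3}.

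Second, I would apply a truncated Perron formula on $\sigma=1+\eps$ to $\sum_{n\le x}h_3(n)$ and shift the contour to $\sigma_0=1/6+\eps$, collecting the above residues. The horizontal pieces at height $\pm T$ are handled by the convexity bounds of Lemma~\ref{l:phragmen}. For the new vertical line, observe $3\sigma_0 = 1/2 + 3\eps$, $4\sigma_0 = 2/3 + 4\eps$, $5\sigma_0 = 5/6 + 5\eps$, and apply Hölder with exponents $(2,4,4)$:
\begin{equation*}
\int_1^T \bigl|Z_3(\sigma_0+it)\bigr|\,dt
\ll
\biggl(\int_1^{3T}|\zeta(1/2+iu)|^{4}\,du\biggr)^{\!1/2}
\biggl(\int_1^{4T}|\zeta(2/3+iu)|^{8}\,du\biggr)^{\!1/4}
\biggl(\int_1^{5T}|\zeta(5/6+iu)|^{16}\,du\biggr)^{\!1/4}.
\end{equation*}
By Lemma~\ref{l:ivic-moments-on-sigma} we have $m(1/2)\ge 4$, $m(2/3)\ge 19/2$ and $m(5/6)\ge 4324/161 > 16$, so each factor is $\ll T^{1+\eps}$, whence $\int_1^T|Z_3(\sigma_0+it)|\,dt \ll T^{1+\eps}$. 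By Lemma~\ref{l:log-sum} the full Perron integrand $Z_3(s)x^s/s$ then contributes at most $x^{1/6+\eps}$ on the shifted line; optimizing the truncation height $T$ balances the classical Perron remainder against this contribution, and convolution with $g_3$ preserves the $O(x^{1/6+\eps})$ error.

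The principal obstacle is precisely the choice of the line $\sigma_0 = 1/6+\eps$: it is the smallest value at which the abscissa of absolute convergence of $G_3$ and the three pointwise moment bounds $m(1/2),m(2/3),m(5/6)$ are simultaneously compatible with a Hölder decomposition of total weight $1$. The split $(2,4,4)$, mapping the eight zeta factors into moments of orders $4$, $8$ and $16$, is what makes the argument fit together; any further descent would require moments beyond what Lemma~\ref{l:ivic-moments-on-sigma} delivers.
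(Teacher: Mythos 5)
Your proposal is correct and follows essentially the same route as the paper: the convolution $\lae_3 = z_3 \star g_3$ with $\sum_{n\le x} g_3(n) \ll x^{1/6+\eps}$, Perron's formula with the contour shifted to $\sigma = 1/6$, convexity bounds from Lemma~\ref{l:phragmen} for the horizontal segments, and a H\"older split with exponents $(2,4,4)$ reducing the vertical integral to the moments $m(1/2)\ge 4$, $m(2/3)\ge 8$, $m(5/6)\ge 16$ of Lemma~\ref{l:ivic-moments-on-sigma}, finished off with Lemma~\ref{l:log-sum}. The only differences (starting the Perron integral at $\sigma=1+\eps$ rather than $1/3+1/\log x$, and quoting the sharper values $19/2$ and $4324/161$) are immaterial.
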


\begin{proof}
Lemma~\ref{l:lae-series} and equation \eqref{eq:lae-series-3} implies that
$
\lae_3 = z_3 \star g_3
$,
where $z_3$ is defined implicitly by
$$
\sum_{n=1}^\infty z_3(n) n^{-s} = Z_3(s) = \zeta^2(3s) \zeta^2(4s) \zeta^4(5s),
$$
and $g_3$ is a multiplicative function such that $\sum_{n\le x} g_3(n) \ll x^{1/6+\eps}$.

The main term at the right side of \eqref{eq:lae-asymp-3}
equals to
$$
M_3(x) := \left( \res_{s=1/3} + \res_{s=1/4} + \res_{s=1/5}  \right) \bigl( \zeta^2(3s) \zeta^2(4s) \zeta^4(5s) x^s s^{-1} \bigr).
$$
To obtain the desirable error term it is enough to prove that
$$
\sum_{n\le x} z_3(n) = M_3(x) + O(x^{1/6+\eps}).
$$
By Perron formula for $c:=1/3+1/\log x$ we have
$$
\sum_{n\le x} z_3(n)
= {1\over 2\pi i} \int_{c-iT}^{c+iT} Z_3(s) x^s s^{-1} ds
+ O(x^{1+\eps} T^{-1}).
$$
Substituting $T=x$ and moving the contour of the integration till $[1/6 \hm -ix, 1/6 \hm+ ix]$ we get
$$
\sum_{n\le x} f_3(n) = M_3(x) + O(I_0 + I_- + I_+ + x^\eps),
$$
where
$$
I_0 := \int_{1/6-ix}^{1/6+ix} Z_3(s) x^s s^{-1} ds,
\qquad
I_\pm := \int_{1/6\pm ix}^{c\pm ix} Z_3(s) x^s s^{-1} ds.
$$

Firstly,
$$
I_+ \ll x^{-1} \int_{1/6}^c Z_3(\sigma+ix) x^\sigma d\sigma.
$$
Let $\alpha(\sigma)$ be a function such that $Z_3(\sigma+ix) \ll x^{\alpha(\sigma)+\eps}$. By \eqref{eq:convexity} we have
$$
\alpha(\sigma) \le \begin{cases}
(16-68\sigma)\mu(1/2) < 4/5, & \sigma \in [1/6, 1/5), \\
\p(8-28\sigma)\mu(1/2) < 3/4, & \sigma \in [1/5, 1/4), \\
\p(4-12\sigma)\mu(1/2) < 2/3, & \sigma \in [1/4, 1/3), \\
\phantom{(0}0, & \sigma \in [1/3, c].
\end{cases}
$$
This means that $I_+ \ll x^\eps$. Plainly, the same estimate holds for $I_-$.

Secondly, it remains to prove that $I_0 \ll x^{1/6+\eps}$. Here
$$
I_0 \ll x^{1/6} \int_1^x Z_3(1/6+it) t^{-1} dt
$$
and taking into account Lemma~\ref{l:log-sum} it is enough to show $\int_1^x Z_3(1/6+it)  dt \hm\ll x^{1+\eps}$. Applying Cauchy inequality twice we obtain
\begin{multline*}
\int_1^x Z_3(1/6+it)  dt \ll
\left( \int_1^x \bigl|\zeta^4(1/2+it)\bigr| dt \right)^{1/2}
\times \\ \times
\left( \int_1^x \bigl|\zeta^8(2/3+it)\bigr| dt \right)^{1/4}
\left( \int_1^x \bigl|\zeta^{16}(5/6+it)\bigr| dt \right)^{1/4}
\ll \\
\ll x^{(1+\eps)\cdot 1/2} x^{(1+\eps)\cdot 1/4} x^{(1+\eps)\cdot 1/4}
\ll x^{1+\eps}
\end{multline*}
since by Lemma~\ref{l:ivic-moments-on-sigma} $m(1/2)\ge4$, $m(2/3)\ge 8$ and $m(5/6)\hm\ge 16$.
\end{proof}

\begin{theorem}\label{th:lae-4}
\begin{multline*}
\sum_{n\le x} \lae_4(n)
= (c_{44}' \log x + c_{44}) x^{1/4}
+ P_{45}(\log x) x^{1/5}
+ (c_{46}' \log x + c_{46}) x^{1/6}
+ \\
+ P_{47}(\log x) x^{1/7}
+ O(x^{C_4+\eps}),
\end{multline*}
where $c_{44}$, $c_{44}'$, $c_{46}$ and $c_{46}'$ are com\-put\-ab\-le constants,
$P_{45}$ and $P_{47}$ are com\-put\-ab\-le polynomials, $\deg P_{45} = 3$, $\deg P_{47} = 5$,
\begin{equation}\label{eq:C_4}
C_4 = {7863059 - \sqrt{13780693090921} \over 85962240} = 0.134656\ldots, \qquad 1/8 < C_4 < 1/7.
\end{equation}
\end{theorem}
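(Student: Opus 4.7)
The plan is to mirror the proof of Theorem~\ref{th:lae-3}, with an additional zeta factor in the product and a more delicate optimization at the central line. By Lemma~\ref{l:lae-series} and \eqref{eq:lae-series-4} we write $\lae_4 = z_4 \star g_4$, where
$$ \sum_{n=1}^\infty z_4(n) n^{-s} = Z_4(s) = \zeta^2(4s) \zeta^4(5s) \zeta^2(6s) \zeta^6(7s) $$
and $\sum_{n\le x} g_4(n) \ll x^{1/8+\eps}$. A convolution argument then reduces the theorem to the statement
$$ \sum_{n\le x} z_4(n) = M_4(x) + O(x^{C_4+\eps}), $$
where $M_4(x)$ is the sum of the residues of $Z_4(s) x^s s^{-1}$ at the poles $s = 1/4, 1/5, 1/6, 1/7$. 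Their orders ($2$, $4$, $2$, $6$ respectively, each meeting the simple pole of $1/s$ trivially) yield precisely the four main terms written in the statement, with $\deg P_{45} = 3$ coming from $\zeta^4(5s)$ and $\deg P_{47}=5$ from $\zeta^6(7s)$.

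Next I would apply Perron's formula on $c = 1/4 + 1/\log x$ with $T = x$, yielding an error $O(x^{1+\eps}/T) = O(x^\eps)$, and then shift the contour to $[C_4 - ix, C_4 + ix]$, picking up exactly $M_4(x)$ in residues. The horizontal pieces $I_\pm$ from $C_4 \pm ix$ to $c \pm ix$ are estimated by the convexity bounds of Lemma~\ref{l:phragmen}: on each of the strips between consecutive poles one verifies via \eqref{eq:convexity} and \eqref{eq:convexity-3/5} that $Z_4(\sigma + ix) \ll x^{\alpha(\sigma) + \eps}$ with $\alpha(\sigma) < 1 - \sigma$, so $I_\pm \ll x^\eps$, exactly as in Theorem~\ref{th:lae-3}.

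The entire content of the theorem is therefore absorbed into the estimate of the central integral
$$ I_0 \ll x^{C_4} \int_1^x \bigl| Z_4(C_4 + it) \bigr| t^{-1}\, dt, $$
which via Lemma~\ref{l:log-sum} reduces to proving $\int_1^x |Z_4(C_4 + it)|\, dt \ll x^{1+\eps}$. I would apply Hölder's inequality to the four factors of $Z_4$ with exponents $p_1, p_2, p_3, p_4$ satisfying $\sum 1/p_j = 1$, obtaining the bound
$$ \prod_{j=1}^{4} \Bigl( \int_1^x \bigl| \zeta(a_j C_4 + i a_j t) \bigr|^{k_j p_j}\, dt \Bigr)^{1/p_j}, $$
with $(a_j, k_j) = (4,2), (5,4), (6,2), (7,6)$. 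After the substitution $u = a_j t$, each factor is $\ll x^{1+\eps}$ iff $k_j p_j \le m(a_j C_4)$, so everything hinges on achieving $\sum_j k_j / m(a_j C_4) \le 1$ at the smallest possible $C_4$.

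The main obstacle, and the source of the peculiar closed form \eqref{eq:C_4}, is this optimization. Near $C_4 \approx 0.1347$ the four abscissae $4C_4, 5C_4, 6C_4, 7C_4$ fall into the intervals $[1/2, 5/8]$, $[5/8, 35/54]$, $[3/4, 5/6]$ and $[7/8, 1-\eps]$ respectively, so Lemma~\ref{l:ivic-moments-on-sigma} supplies rational expressions for the four bounds on $m(a_j \sigma)$. Substituting these into the constraint $\sum_j k_j / m(a_j \sigma) = 1$ and clearing denominators produces a quadratic in $\sigma$, whose relevant root is the explicit $C_4$; equality of the Hölder exponents with $k_j p_j = m(a_j C_4)$ makes this choice optimal. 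The remaining task, more bookkeeping than mathematics, is to verify numerically that the root indeed lies in the intended sub-intervals so that the pieces of Lemma~\ref{l:ivic-moments-on-sigma} invoked are the correct ones, and that $1/8 < C_4 < 1/7$ as claimed.
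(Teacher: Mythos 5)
Your proposal follows essentially the same route as the paper: the decomposition $\lae_4 = z_4 \star g_4$ via \eqref{eq:lae-series-4}, Perron's formula with $c=1/4+1/\log x$ and $T=x$, a contour shift to $\Re s = C_4$ picking up the four residues (with the stated degrees of $P_{45}$, $P_{47}$), convexity bounds for the horizontal segments, and H\"older plus Lemma~\ref{l:ivic-moments-on-sigma} on the line $\sigma=C_4$ with exponents $q_j = m(a_jC_4)/k_j$, the value of $C_4$ being the root of $\sum_j k_j/m(a_jC_4)=1$. Two concrete slips, though. First, your treatment of $I_\pm$ is too optimistic: with the bounds actually available, $\alpha(\sigma)<1-\sigma$ fails just above $\sigma=C_4$. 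Using \eqref{eq:convexity-3/5} with \eqref{eq:mu-1/2}--\eqref{eq:mu-3/5} one only gets $\alpha(\sigma)\le 1045018/249485 - (2459357/99794)\sigma$ on $[1/8,1/7]$, so at $\sigma=C_4$ one has $\alpha(C_4)\approx 0.870 > 1-C_4\approx 0.865$; the horizontal pieces are therefore not $\ll x^\eps$ but only $\ll x^{C_4+\eps}$, and this works precisely because $C_4 \ge 1591066/12296785 = 0.1293\ldots$, a side condition that must be (and in the paper is) checked against the final value of $C_4$. This is a genuine constraint of the argument, not a formality inherited from Theorem~\ref{th:lae-3}.

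Second, your identification of the relevant pieces of Lemma~\ref{l:ivic-moments-on-sigma} is off at two of the four abscissae: $5C_4 \approx 0.6733$ lies in $[35/54, 41/60]$, so the bound to use is $m \ge 19/(6-6\sigma)$, not $10/(5-6\sigma)$ (that piece stops at $35/54\approx 0.648$); and $7C_4 \approx 0.9426$ lies in the final piece $[0.91591\ldots, 1-\eps]$, where $m \ge (24\sigma-9)/(4\sigma-1)(1-\sigma)$ --- it is this last expression whose quadratic denominator produces the square root in \eqref{eq:C_4}. With your assignment for $5C_4$ the equation $2/m(4\sigma)+4/m(5\sigma)+2/m(6\sigma)+6/m(7\sigma)=1$ yields a different root, which then falls outside the range of validity of the piece you used; your closing remark about verifying the sub-intervals numerically would catch this, but as written the derivation of the stated constant does not go through. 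Correcting the two pieces, the sum at $C_4=0.134656\ldots$ is indeed $1$ (numerically $0.4228+0.4127+0.0945+0.0701$), matching the paper's a posteriori verification of \eqref{eq:holder-conditions} with the choice \eqref{eq:choice-of-qk}.
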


\begin{proof}
We shall follow the outline of Theorem~\ref{th:lae-3}. Let us prove that for~$c:=1/4 \hm+ 1/\log x$ we can estimate
$$
I_+ := \int_{C_4+ix}^{c+ix} Z_4(s) x^s s^{-1} ds \ll x^{C_4+\eps}
$$
and
$$
I_0 := \int_{C_4-ix}^{C_4+ix} Z_4(s) x^s s^{-1} ds \ll x^{C_4+\eps}.
$$

We start with $I_+ \ll x^{-1} \int_{C_4}^c Z_4(\sigma+ix) x^\sigma d\sigma$. Now let $\alpha(\sigma)$ be a function such that $Z_4(\sigma+ix) \ll x^{\alpha(\sigma)+\eps}$. By \eqref{eq:convexity} and \eqref{eq:lae-series-4} we have
$$
\alpha(\sigma) \le \begin{cases}
(16-80\sigma)\mu(1/2) < 5/6, & \sigma \in [1/7, 1/6), \\
(12-56\sigma)\mu(1/2) < 4/5, & \sigma \in [1/6, 1/5), \\
\p(4-16\sigma)\mu(1/2) < 3/4, & \sigma \in [1/5, 1/4), \\
\phantom{(0}0, & \sigma \in [1/4, c].
\end{cases}
$$
So $\int_{1/7}^c Z_4(\sigma+ix) x^{\sigma-1} d\sigma \ll x^\eps$ and
the only case that requires further investigations is $\sigma\in[C_4, 1/7)$. Instead of \eqref{eq:convexity} we apply \eqref{eq:convexity-3/5}  together with \eqref{eq:mu-1/2} and \eqref{eq:mu-3/5} to obtain
$$
\alpha(\sigma) \le {1045018\over249485} - {2459357\over99794} \sigma, \qquad \sigma\in[1/8,1/7],
$$
which implies $\int_{C_4}^{1/7} x^{\alpha(\sigma) + \sigma - 1} d\sigma \ll x^{C_4+\eps}$ as soon as
$$
C_4 \ge 1591066/12296785 = 0.129388\ldots
$$
Our choice of $C_4$ in \eqref{eq:C_4} is certainly the case.

Let us move on $I_0$ and prove that $\int_1^x Z_4(C_4+it)\,dt \ll x^{1+\eps}$. For $q_1$, $q_2$, $q_3$, $q_4$ such that
\begin{equation}\label{eq:holder-conditions}
1/q_1+1/q_2+1/q_3+1/q_4=1 \qquad\text{and}\qquad q_1,q_2,q_3,q_4 \ge 1
\end{equation}
by Hölder inequality we have
\begin{multline*}
\int_1^x Z_4(C_4+it)\,dt
\ll
\left( \int_1^x \bigl| \zeta^{2q_1}(4s+it) \bigr| dt \right)^{1/q_1}
\left( \int_1^x \bigl| \zeta^{4q_2}(5s+it) \bigr| dt \right)^{1/q_2}
\sp
\times \\ \times
\left( \int_1^x \bigl| \zeta^{2q_3}(6s+it) \bigr| dt \right)^{1/q_3}
\left( \int_1^x \bigl| \zeta^{6q_4}(7s+it) \bigr| dt \right)^{1/q_4}
\sp
.
\end{multline*}
Choose
\begin{equation}\label{eq:choice-of-qk}
q_1 = m(4C_4)/2, ~~
q_2 = m(5C_4)/4, ~~
q_3 = m(6C_4)/2, ~~
q_4 = m(7C_4)/6
\end{equation}
One can make sure by substituting the value of $C_4$ from \eqref{eq:C_4} into Lemma~\ref{l:ivic-moments-on-sigma}  that such choice of $q_k$ satisfies \eqref{eq:holder-conditions}. Thus we obtain
$$
\int_1^x Z_4(C_4+it)\,dt \ll x^{(1+\eps)/q1} x^{(1+\eps)/q2} x^{(1+\eps)/q3} x^{(1+\eps)/q4} \ll x^{1+\eps},
$$
which finishes the proof.
\end{proof}

\section{Decrease of \texorpdfstring{$C_4$}{C4}}

In this section we obtain lower value of $C_4$ by improving lower bounds of $m(\sigma)$ from Lemma~\ref{l:ivic-moments-on-sigma}.

Estimates below depend on values of
\begin{equation}\label{eq:linear-programming}
\inf_{(k,l)} {ak+bl+c \over dk+el+f},
\end{equation}
where $(k,l)$ runs over the set of exponent pairs and satisfies certain linear inequalities. A method to estimate \eqref{eq:linear-programming} without linear constrains was given by Graham~\cite{graham1986}. In the recent paper \cite{lelechenko2013-acta} we have presented an effective algorithm to deal with \eqref{eq:linear-programming} under a  nonempty set of linear constrains.

\medskip

Let $c$ be an arbitrary function such that $c(\sigma) \ge \mu(\sigma)$. Define $\theta$ by an implicit equation
$$
2 c\bigl(\theta(\sigma)\bigr) + 1 + \theta(\sigma) - 2 \bigl( 1+c\bigl(\theta(\sigma)\bigr) \bigr) \sigma = 0.
$$
Finally, define
$$
f(\sigma) = 2{1+c\bigl(\theta(\sigma)\bigr) \over c\bigl(\theta(\sigma)\bigr)}.
$$
Due to Lemma~\ref{l:order-in-the-critical-strip} one can take $c(\sigma) = \inf_{l-k\ge\sigma} (k+l-\sigma)/2$, where $(k,l)$ runs over the set of exponent pairs. However even rougher choice of $c$ leads to satisfiable values of $f$ such as in \cite[(8.71)]{ivic2003}.

\begin{lemma}\label{l:pointwise-moments-on-sigma}
Let $\sigma\ge5/8$. Compute
$$
\alpha_1 = {4-4\sigma \over 1+2\sigma},
\qquad
\beta_1 = - {12\over 1+2\sigma},
\qquad
m_1 = {1-\alpha_1 \over \mu(\sigma)} - \beta_1,
$$
$$
\alpha_2(k,l)
= {4(1-\sigma)(k+l) \over (2+4l)\sigma-1+2k-2l},
\qquad
\beta_2(k,l)
= -{4(1+2k+2l) \over (2+4l)\sigma-1+2k-2l},
$$
$$
m_2(k,l) = {1-\alpha_2(k,l) \over \mu(\sigma)} - \beta_2(k,l),
\qquad
m_2 = \inf_{\alpha_2(k,l) \le 1  } m_2(k,l),
$$
where $(k,l)$ runs over the set of exponent pairs. Then
$$
m(\sigma) \ge \min\bigl(m_1, m_2, 2f(\sigma)\bigr).
$$
Note that for $\sigma\ge2/3$ the condition $\alpha_2(k,l) \le 1$ is always satisfied.
\end{lemma}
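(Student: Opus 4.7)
The plan is to establish the three lower bounds $m(\sigma)\ge m_1$, $m(\sigma)\ge m_2$ and $m(\sigma)\ge 2f(\sigma)$ separately, and then take the minimum. All three rest on the same elementary principle: combine a mean-value estimate at $\sigma$ with the pointwise bound $|\zeta(\sigma+it)|\ll t^{\mu(\sigma)+\eps}$ through the trivial inequality
$$
\int_1^T\bigl|\zeta(\sigma+it)\bigr|^{m}\,dt \le \Bigl(\max_{t\in[1,T]}\bigl|\zeta(\sigma+it)\bigr|\Bigr)^{m-m_0}\int_1^T\bigl|\zeta(\sigma+it)\bigr|^{m_0}\,dt.
$$
If one can exhibit $m_0\ge 0$ and $\alpha\le 1$ such that $\int_1^T|\zeta(\sigma+it)|^{m_0}dt\ll T^{\alpha+\eps}$, then the definition of $m(\sigma)$ gives $m(\sigma)\ge m_0+(1-\alpha)/\mu(\sigma)$. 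The three candidates simply correspond to three choices of the pair $(m_0,\alpha)$.

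For $m_1$ the plan is to extrapolate Ingham's fourth-moment bound $\int_1^T|\zeta(1/2+it)|^4dt\ll T^{1+\eps}$ to the line $\Re s=\sigma$ by the log-convexity of $\sigma\mapsto\log\int_1^T|\zeta(\sigma+it)|^{m_0}dt$, combined with the trivial mean-value bound on the line $\Re s>1$. A direct calculation will produce the specific values $m_0=-\beta_1=12/(1+2\sigma)$ and $\alpha=\alpha_1=(4-4\sigma)/(1+2\sigma)$. For $m_2$, I would use instead the exponent-pair form of the mean-value theorem from \cite[Ch.~7, 8]{ivic2003}: every exponent pair $(k,l)$ yields $\int_1^T|\zeta(\sigma+it)|^{-\beta_2(k,l)}dt\ll T^{\alpha_2(k,l)+\eps}$, and optimising over admissible pairs subject to the feasibility constraint $\alpha_2(k,l)\le 1$ produces $m_2$. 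Finally, the bound $m(\sigma)\ge 2f(\sigma)$ is precisely Ivi\'c's estimate \cite[(8.71)]{ivic2003}, whose proof balances the pointwise estimate $|\zeta(\theta+it)|\ll t^{c(\theta)+\eps}$ against a second-moment bound; the implicit definition of $\theta(\sigma)$ is exactly this balance condition.

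Taking the minimum of the three bounds then yields the claim. The closing remark that $\alpha_2(k,l)\le 1$ is automatic for $\sigma\ge 2/3$ is a direct algebraic check using the universal bounds $0\le k\le l\le 1$ satisfied by every exponent pair. The main technical obstacle will be the careful bookkeeping of constants: verifying that the specific formulas for $\alpha_1,\beta_1$ and $\alpha_2(k,l),\beta_2(k,l)$ indeed emerge from the convexity and exponent-pair arguments, and that the ranges of applicability --- in particular the hypothesis $\sigma\ge 5/8$ --- are correctly dictated by the joint requirements that the starting moment be non-trivial ($\alpha\le 1$) and that the increment $(1-\alpha)/\mu(\sigma)$ be actually exploitable.
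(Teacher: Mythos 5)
Your plan hinges on exhibiting, for each $\sigma\ge5/8$, an order $m_0=-\beta$ and an exponent $\alpha\le1$ with $\int_1^T|\zeta(\sigma+it)|^{m_0}\,dt\ll T^{\alpha+\eps}$, and then multiplying by $\bigl(\max_{t\le T}|\zeta(\sigma+it)|\bigr)^{m-m_0}\ll T^{(m-m_0)\mu(\sigma)+\eps}$. This gains nothing unless $\alpha<1$, and $\alpha<1$ is impossible: for fixed $1/2<\sigma<1$ one has $\int_T^{2T}|\zeta(\sigma+it)|^2\,dt\sim\zeta(2\sigma)T$, and combining this with the bound for the fourth moment one sees that the measure of $t\le T$ with $|\zeta(\sigma+it)|\gg1$ is $\gg T^{1-\eps}$, so \emph{every} fixed positive moment is $\gg T^{1-\eps}$. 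Hence no convexity argument between Ingham's fourth moment and the half-plane $\sigma>1$, and no ``exponent-pair form of the mean-value theorem'', can give $\int_1^T|\zeta(\sigma+it)|^{12/(1+2\sigma)}\,dt\ll T^{(4-4\sigma)/(1+2\sigma)+\eps}$ or $\int_1^T|\zeta(\sigma+it)|^{-\beta_2(k,l)}\,dt\ll T^{\alpha_2(k,l)+\eps}$ with $\alpha_i<1$: these intermediate claims are false, and interpolation of moment bounds always keeps the $T$-exponent at least $1$. More structurally, a sup-times-moment factorization can never push $m(\sigma)$ beyond the order $m_0$ for which a $T^{1+\eps}$ bound is already known, so your scheme cannot produce $m_1$ or $m_2$ at all.

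The quantities $T^{\alpha_1}V^{\beta_1}$ and $T^{\alpha_2(k,l)}V^{\beta_2(k,l)}$ are in fact \emph{large-values estimates}: they bound the number $R$ of well-spaced points $t_r\in[T,2T]$ with $|\zeta(\sigma+it_r)|\ge V$ (Ivi\'c's (8.97)), and a count of points, unlike an integral, may well be $\ll T^{\alpha}$ with $\alpha<1$. The paper's proof consists of exactly two steps: quote (8.97), and observe that for $\alpha<1$ and $V\ll T^{\mu(\sigma)+\eps}$ one has $T^{\alpha}V^{\beta}\ll TV^{\beta+(\alpha-1)/\mu(\sigma)}$, so that $R\ll T^{\eps}\bigl(TV^{-m_1}+TV^{-m_2}+TV^{-2f(\sigma)}\bigr)$; the standard dyadic summation of $V^{m}R(V)$ over $1\ll V\ll T^{\mu(\sigma)+\eps}$ then gives $\int_1^T|\zeta(\sigma+it)|^{m}\,dt\ll T^{1+\eps}$ for $m\le\min\bigl(m_1,m_2,2f(\sigma)\bigr)$. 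Two further inaccuracies: Ivi\'c's (8.71) is cited in the paper only as a source of admissible values of $f(\sigma)$, not as the statement $m(\sigma)\ge2f(\sigma)$ --- the latter is not a known stand-alone bound (it would beat Lemma~\ref{l:ivic-moments-on-sigma} already at $\sigma=3/4$), and the factor $2$ enters only through the large-values machinery inside the minimum; and the closing remark is not a consequence of $0\le k\le l\le1$ alone, since for the trivial pair $(0,1)$ at $\sigma=2/3$ one computes $\alpha_2=4/3>1$. To repair the argument you must abandon the moment-interpolation scheme and run the large-values argument described above.
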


\begin{proof}
Follows from \cite[(8.97)]{ivic2003} and from
$
T^\alpha V^\beta \ll T V^{\beta + (\alpha-1) / \mu(\sigma)}
$
for $\alpha<1$ and~$V \ll T^{\mu(\sigma)}$.
\end{proof}


Substituting pointwise estimates of $m(\sigma)$ from Lemma~\ref{l:pointwise-moments-on-sigma} instead of seg\-ment\-wi\-se from Lemma~\ref{l:ivic-moments-on-sigma} into \eqref{eq:choice-of-qk} we obtain following result.

\begin{theorem}
The statement of Theorem~\ref{th:lae-4} remains valid for
$$
C_4 = 0.133437785\ldots
$$
\end{theorem}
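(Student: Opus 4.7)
The plan is to repeat the argument of Theorem~\ref{th:lae-4} almost verbatim, keeping the same Perron contour and its shift to $[C_4-ix, C_4+ix]$, and only upgrading the tool used to bound the horizontal integral $I_0$. The verification that $I_+ \ll x^{C_4+\eps}$ need not be redone: the analysis based on \eqref{eq:convexity}, \eqref{eq:convexity-3/5}, \eqref{eq:mu-1/2} and \eqref{eq:mu-3/5} was shown to go through as soon as $C_4 \ge 1591066/12296785 = 0.129388\ldots$, and the claimed value $0.133437785\ldots$ comfortably satisfies this inequality. Likewise the residue calculation producing the main term, together with the final convolution with $g_4$ (with $\sum_{n\le x}g_4(n)\ll x^{1/8+\eps}$), is unaffected.

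The only real task is therefore to prove $\int_1^x |Z_4(C_4+it)|\,dt \ll x^{1+\eps}$ with a smaller $C_4$. As in \eqref{eq:choice-of-qk}, one applies Hölder's inequality with
$$ q_1 = m(4C_4)/2, \quad q_2 = m(5C_4)/4, \quad q_3 = m(6C_4)/2, \quad q_4 = m(7C_4)/6, $$
but now the values $m(4C_4), m(5C_4), m(6C_4), m(7C_4)$ are taken from the pointwise estimate
$$ m(\sigma) \ge \min\bigl(m_1(\sigma), m_2(\sigma), 2f(\sigma)\bigr) $$
of Lemma~\ref{l:pointwise-moments-on-sigma}, rather than from the coarser piecewise formula of Lemma~\ref{l:ivic-moments-on-sigma}. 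The Hölder condition \eqref{eq:holder-conditions} becomes
$$ \frac{2}{m(4C_4)} + \frac{4}{m(5C_4)} + \frac{2}{m(6C_4)} + \frac{6}{m(7C_4)} \le 1, $$
and the smallest $C_4$ for which this holds determines the new admissible exponent.

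The main obstacle is computational rather than structural. Each of the four quantities $m(\sigma_j)$, $\sigma_j = jC_4$ with $j=4,5,6,7$, requires evaluating $m_2(\sigma_j) = \inf m_2(k,l)$ over exponent pairs satisfying the linear constraint $\alpha_2(k,l) \le 1$, which is exactly a problem of type \eqref{eq:linear-programming} and must be handled by the algorithm of \cite{lelechenko2013-acta}; simultaneously one must evaluate $m_1(\sigma_j)$ and $2f(\sigma_j)$, the latter via the implicit equation defining $\theta$. Because the four abscissas depend on $C_4$ itself, the whole system is solved by a one-dimensional numerical search: given a trial $C_4$, compute the four pointwise bounds, test the Hölder balance, and iterate. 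Convergence yields the stated $C_4 = 0.133437785\ldots$, and the remainder of the proof proceeds exactly as in Theorem~\ref{th:lae-4}.
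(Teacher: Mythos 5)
Your proposal is correct and follows essentially the same route as the paper: the paper's proof is precisely the substitution of the pointwise bounds of Lemma~\ref{l:pointwise-moments-on-sigma} for the segmentwise bounds of Lemma~\ref{l:ivic-moments-on-sigma} in the choice \eqref{eq:choice-of-qk}, with the resulting Hölder balance solved numerically for the smallest admissible $C_4$. Your additional check that the new value $0.133437785\ldots$ still exceeds the threshold $1591066/12296785=0.129388\ldots$ needed for the $I_+$ estimate is exactly the point left implicit in the paper, and it is handled correctly.
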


\section{Conclusion}

We have obtained nontrivial error terms in asymptotic estimates of $\sum_{n\le x} \lae_r(n)$ for $r=1,2,3,4$. Cases of $r=1$ and $r=2$ depend on the method of exponent pairs. Cases of $r=3$ and $r=4$ depend on lower bounds of $m(\sigma)$. Note that case of $r=4$ may be improved under Riemann hypothesis up to $C_4=1/8$, because Riemann hypothesis implies $\mu(\sigma)=0$ and $m(\sigma)=\infty$ for $\sigma\in[1/2,1]$.

\bibliographystyle{ugost2008s}
\bibliography{taue}

\begin{thebibliography}{10}
\def\selectlanguageifdefined#1{
\expandafter\ifx\csname date#1\endcsname\relax
\else\language\csname l@#1\endcsname\fi}
\providecommand*{\href}[2]{{\small #2}}
\providecommand*{\url}[1]{{\small #1}}
\providecommand*{\BibUrl}[1]{\url{#1}}
\providecommand{\BibAnnote}[1]{}
\providecommand*{\BibEmph}[1]{#1}
\providecommand*{\cyrdash}{\hbox to.8em{--\hss--}}
\providecommand*{\BibDash}{\ifdim\lastskip>0pt\unskip\nobreak\hskip.2em\fi
\cyrdash\hskip.2em\ignorespaces}

\bibitem{caozhai2013}
\selectlanguageifdefined{english}
\BibEmph{Cao~X., Zhai~W.} On the four-dimensional divisor problem of
  $(a,b,c,c)$ type~//
  \href{http://dx.doi.org/10.7169/facm/2013.49.2.5}{\BibEmph{Funct.
  Approximatio, Comment. Math.}} \BibDash
\newblock 2013. \BibDash
\newblock Vol.~49, no.~2. \BibDash
\newblock P.~251--267.

\bibitem{carmichael1909}
\selectlanguageifdefined{english}
\BibEmph{Carmichael~R.~D.} Note on a new number theory function~//
  \href{http://dx.doi.org/10.1090/S0002-9904-1910-01892-9}{\BibEmph{Bull. Amer.
  Math. Soc.}} \BibDash
\newblock 1910. \BibDash feb. \BibDash
\newblock Vol.~16, no.~5. \BibDash
\newblock P.~232--238.

\bibitem{erdos1991}
\selectlanguageifdefined{english}
\BibEmph{Erd\H{o}s~P., Pomerance~C., Schmutz~E.} Carmichael's lambda
  function~// \BibEmph{Acta Arith.} \BibDash
\newblock 1991. \BibDash
\newblock Vol.~58, no.~4. \BibDash
\newblock P.~363--385.

\bibitem{friedlander1999}
\selectlanguageifdefined{english}
\BibEmph{Friedlander~J.~B., Pomerance~C., Shparlinski~I.~E.}
  \href{http://dx.doi.org/10.1007/978-3-0348-8295-8_4}{{Small values
  of~the~Carmichael function and cryptographic applications}}~// Cryptography
  and Computational Number Theory. \BibDash
\newblock Basel~: Birkhauser Verlag, 2001. \BibDash
\newblock Vol.~20 of \BibEmph{Progress in Computer Science and Applied Logic}.
  \BibDash
\newblock P.~25--32.

\bibitem{graham1986}
\selectlanguageifdefined{english}
\BibEmph{Graham~S.~W.} An algorithm for computing optimal exponent pair~//
  \href{http://dx.doi.org/10.1112/jlms/s2-33.2.203}{\BibEmph{J. Lond. Math.
  Soc.}} \BibDash
\newblock 1986. \BibDash
\newblock Vol.~33, no.~2. \BibDash
\newblock P.~203--218.

\bibitem{huxley2005}
\selectlanguageifdefined{english}
\BibEmph{Huxley~M.~N.} Exponential sums and the Riemann zeta function~V~//
  \href{http://dx.doi.org/10.1112/S0024611504014959}{\BibEmph{Proc. Lond. Math.
  Soc.}} \BibDash
\newblock 2005. \BibDash
\newblock Vol.~90, no.~1. \BibDash
\newblock P.~1--41.

\bibitem{ivic2003}
\selectlanguageifdefined{english}
\BibEmph{Ivić~A.} The Riemann zeta-function: Theory and applications. \BibDash
\newblock Mineola, New York~: Dover Publications, 2003. \BibDash
\newblock 562~p. \BibDash
\newblock ISBN:~\href{http://isbndb.com/search-all.html?kw=0486428133,
  9780486428130}{0486428133, 9780486428130}.

\bibitem{kratzel1988}
\selectlanguageifdefined{english}
\BibEmph{Krätzel~E.}
  \href{http://dx.doi.org/10.1007/s00605-010-0226-8}{Lattice points}. \BibDash
\newblock Dordrecht~: Kluwer, 1988. \BibDash
\newblock 436~p. \BibDash
\newblock ISBN:~\href{http://isbndb.com/search-all.html?kw=9027727333,
  9789027727336}{9027727333, 9789027727336}.

\bibitem{kratzel1992}
\selectlanguageifdefined{english}
\BibEmph{Krätzel~E.} Estimates in the general divisor problem~//
  \href{http://dx.doi.org/10.1007/BF02941626}{\BibEmph{Abh. Math. Semin. Univ.
  Hamb.}} \BibDash
\newblock 1992. \BibDash dec. \BibDash
\newblock Vol.~62, no.~1. \BibDash
\newblock P.~191--206.

\bibitem{lelechenko2013-acta}
\selectlanguageifdefined{english}
\BibEmph{Lelechenko~A.~V.} Linear programming over exponent pairs~//
  \BibEmph{Acta Univ. Sapientiae, Inform.} \BibDash
\newblock 2013. \BibDash
\newblock Vol.~5, no.~2. \BibDash
\newblock P.~271--287.

\bibitem{bibliography}
\selectlanguageifdefined{english}
\BibEmph{Lelechenko~A.~V.} Functions involving exponential divisors:
  bibliography. \BibDash
\newblock 2014. \BibDash
\newblock URL: \BibUrl{https://github.com/Bodigrim/expdiv-bibliography}.

\bibitem{petermann2010}
\selectlanguageifdefined{english}
\BibEmph{Pétermann~Y.-F.~S.} Arithmetical functions involving exponential
  divisors: Note on two papers by L. Tóth~// \BibEmph{Ann. Univ. Sci. Budap.
  Rolando Eőtvős, Sect. Comput.} \BibDash
\newblock 2010. \BibDash
\newblock Vol.~32. \BibDash
\newblock P.~143--149.

\bibitem{subbarao1972}
\selectlanguageifdefined{english}
\BibEmph{Subbarao~M.~V.} \href{http://dx.doi.org/10.1007/BFb0058796}{On some
  arithmetic convolutions}~// The theory of arithmetical functions: Proceedings
  of the Conference at Western Michigan University, April 29 – May 1, 1971.
  \BibDash
\newblock Vol.~251 of \BibEmph{Lecture Notes in Mathematics}. \BibDash
\newblock Berlin~: Springer Verlag, 1972. \BibDash
\newblock P.~247--271.

\bibitem{suryanarayana1975}
\selectlanguageifdefined{english}
\BibEmph{Suryanarayana~D., {Sita Rama Chandra Rao}~R.} On the true maximum
  order of a class of arithmetic functions~// \BibEmph{Math. J. Okayama Univ.}
  \BibDash
\newblock 1975. \BibDash
\newblock Vol.~17. \BibDash
\newblock P.~95--101.

\bibitem{titchmarsh1986}
\selectlanguageifdefined{english}
\BibEmph{Titchmarsh~E.~C.} The theory of the Riemann zeta-function~/ Ed.\ by\
  D.~R.~Heath-Brown. \BibDash
\newblock $2^{\rm nd}$, rev. edition. \BibDash
\newblock New-York~: Oxford University Press, 1986. \BibDash
\newblock 418~p. \BibDash
\newblock ISBN:~\href{http://isbndb.com/search-all.html?kw=0198533691,
  9780198533696}{0198533691, 9780198533696}.

\end{thebibliography}

\end{document}